\documentclass{article}
\usepackage[utf8]{inputenc}

\usepackage{amsmath}
\usepackage{amsthm}
\usepackage{amssymb,amsfonts}
\usepackage{hyperref}
\usepackage[capitalize]{cleveref}
\usepackage{latexsym}
\usepackage{todonotes}
\usepackage{nicefrac}
\usepackage{epsfig}
\usepackage{stmaryrd}
\usepackage{float}
\usepackage{setspace}
\usepackage{tikz-cd}
\usepackage{enumerate}
\usepackage[all]{xypic}
\usepackage{bbm,ifpdf,tikz}
\ifpdf
\usepackage{pdfsync}
\fi
\usetikzlibrary{positioning,matrix,arrows,decorations.pathmorphing}
\usepackage{enumitem}

\usetikzlibrary{positioning,arrows}
\usetikzlibrary{decorations.markings}
\tikzstyle{chamberlabel} = [draw, teal!60!gray, shape=circle, minimum size=5pt, inner sep=1pt]
\tikzstyle{invisivertex} = [black, shape=rectangle, minimum size=0pt, inner sep=2pt]
\tikzstyle{point}=[draw, black, fill,shape=circle, minimum size=4pt, inner sep=0pt]
\tikzstyle{moebius} = [draw, teal, shape=circle, minimum size=8pt, inner sep=1.5pt]

\oddsidemargin=0pt
\evensidemargin=0pt
\topmargin=0in
\headheight=0pt
\headsep=0pt
\setlength{\textheight}{9in}
\setlength{\textwidth}{6.5in}

\newtheorem{thm}{Theorem}
\newtheorem{prop}[thm]{Proposition}

\newtheorem{lemma}[thm]{Lemma}

\theoremstyle{definition}
\newtheorem{definition}{Definition}

\newtheorem{rmk}{Remark}

\theoremstyle{definition}

\newcommand{\excise}[1]{}

\newcommand{\R}{\mathbb{R}}

\renewcommand{\cH}{\mathcal{H}}

\usepackage[
backend=biber,
style=alphabetic,
sorting=ynt
]{biblatex}
\addbibresource{reference.bib}

\title{Topology of the Bend Loci of Convex Piecewise Linear Functions}
\author{Jidong Wang\thanks{Department of Mathematics, University of Texas-Austin, Austin, Texas, 78712 USA. Email: jidongw@utexas.edu}}
\date{}

\begin{document}

\maketitle

\begin{abstract}
    This short article serves as the appendix for \cite{tran2022minimal}. We prove that a complete intersection of $n$ generic polyhedral hypersurfaces in $\R^d$ is $(d-n-1)$-connected for $d\geq 2, d>n$. 
\end{abstract}

\section{Background}

Any convex PL function $f:\R^d\to \R$ can be written in the following form
\begin{equation}\label{form}
    f(x)=\max\{\langle x,b_1\rangle +a_1,\cdots, \langle x,b_r\rangle + a_r\}
\end{equation}
for some $b_1,...,b_r\in \R^d$ and $a_1,...,a_r\in \R$.

\begin{definition} A \textit{polyhedral hypersurface} is the bend locus of any convex PL function. A polyhedral hypersurface is \textit{generic} if all the coefficients $b_i\in \R^d$ and $a_i\in \R$ come from probability distributions with continuous density. A \textit{complete intersection} of generic polyhedral hypersurfaces is the intersection of generic polyhedral hypersurfaces that come from independent probability distributions.
\end{definition}

In particular, with probability one, a complete intersection of generic polyhedral hypersurfaces is transverse and free of parallel faces. The main purpose of this article is to prove the following topological property about generic complete intersections.

\begin{thm}\label{thm:main}
Let $X$ be a complete intersection of $n$ generic polyhedral hypersurfaces in $\R^d$ where $d\geq 2, d>n$. Then $X$ and its one-point compactification are both $(d-n-1)$-connected. 
\end{thm}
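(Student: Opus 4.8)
The plan is to induct on the number $n$ of hypersurfaces, proving the statement for $X$ and for its one‑point compactification $X^{+}$ simultaneously; the base case $n=0$ is $X=\R^{d}$ and $X^{+}=S^{d}$, both $(d-1)$‑connected. For the inductive step I set $Y:=H_{1}\cap\cdots\cap H_{n-1}$, so that $X=Y\cap H_{n}$ and, by the inductive hypothesis (valid since $d>n-1$), both $Y$ and $Y^{+}$ are $(d-n)$‑connected; the same hypothesis will be applied to complete intersections of the $n-1$ hypersurfaces inside open convex subsets of $\R^{d}$.

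For the compactification $X^{+}$ I would use Alexander duality. Since $H_{i}$ is the bend locus of $f_{i}$, the complement $\R^{d}\setminus H_{i}$ is the disjoint union of the (open, convex) domains of linearity of $f_{i}$; hence $\R^{d}\setminus X=\bigcup_{i=1}^{n}(\R^{d}\setminus H_{i})$ is a locally finite union of open convex sets all of whose finite intersections are open and convex, so by the nerve lemma $\R^{d}\setminus X$ is homotopy equivalent to the nerve $\mathcal{N}$ of this cover. The key observation is that two domains of linearity of the \emph{same} $f_{i}$ are disjoint, so a simplex of $\mathcal{N}$ contains at most one domain from each of the $n$ subdivisions; therefore $\dim\mathcal{N}\le n-1$. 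Since $X$ is unbounded (a generic fact when $d>n$), $X^{+}$ is the closure of $X$ in $S^{d}$ with $S^{d}\setminus X^{+}=\R^{d}\setminus X$, and Alexander duality (worked over $\Q$ and each $\mathbb{F}_{p}$, to avoid torsion subtleties) gives $\tilde H_{i}(X^{+})\cong\tilde H^{\,d-1-i}(\R^{d}\setminus X)\cong\tilde H^{\,d-1-i}(\mathcal{N})=0$ for $i\le d-n-1$, the last group vanishing because $d-1-i\ge n>\dim\mathcal{N}$. Together with $\pi_{1}(X^{+})=0$ (see below) and the Hurewicz theorem, this yields that $X^{+}$ is $(d-n-1)$‑connected.

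For $X$ itself I would compare it with $Y$. The bend locus $H_{n}$ is precisely the $(d-1)$‑skeleton of the polyhedral subdivision of $\R^{d}$ into the closed domains of linearity $\overline{C_{1}},\overline{C_{2}},\dots$ of $f_{n}$; consequently $Y$ is covered by the subcomplexes $Y\cap\overline{C_{k}}$, any two of which meet inside $Y\cap H_{n}=X$, and $Y/X$ is homotopy equivalent to the wedge $\bigvee_{k}(Y\cap\overline{C_{k}})/(Y\cap\partial\overline{C_{k}})$. Now $Y\cap C_{k}$ is a complete intersection of the $n-1$ generic hypersurfaces $H_{1},\dots,H_{n-1}$ inside the open convex polyhedron $C_{k}$, which is affinely a copy of $\R^{d}$; so the compactification half of the inductive hypothesis makes each wedge summand $(Y\cap\overline{C_{k}})/(Y\cap\partial\overline{C_{k}})$ a $(d-n)$‑connected space. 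Hence $(Y,X)$ is $(d-n)$‑connected as a pair, and since $Y$ is $(d-n)$‑connected the long exact sequence of the pair gives $\pi_{i}(X)\cong\pi_{i}(Y)=0$ for $i\le d-n-1$. Whenever $d-n-1\ge 1$ the relative cells attached to $X$ to recover $Y$ have dimension $\ge d-n+1\ge 3$, so this comparison is genuinely homotopical; in particular $\pi_{1}(X)=0$, and since $X^{+}$ is obtained from $X$ by coning off its link at infinity, van Kampen then gives $\pi_{1}(X^{+})=0$, which is what was needed in the previous paragraph.

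The hard part is making the third paragraph rigorous. One must check that $X$, $Y\cap\overline{C_{k}}$ and $Y\cap\partial\overline{C_{k}}$ sit inside $Y$ as a cofibred decomposition (via regular neighborhoods in the polyhedral category), and, more delicately, identify the summand $(Y\cap\overline{C_{k}})/(Y\cap\partial\overline{C_{k}})$ with a compactification of $Y\cap C_{k}$; this is subtle when $\overline{C_{k}}$ is unbounded, since then one is collapsing only part of the end of $Y\cap C_{k}$. The cleanest remedy is to first intersect everything with a large open ball (harmless up to homotopy, as $X$ and $Y$ are conical near infinity), so that all cells $\overline{C_{k}}$ become bounded, and to strengthen the induction to allow intersections inside an arbitrary generic convex polytope with a prescribed union of facets collapsed. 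Finally, the transversality and genericity hypotheses are exactly what guarantee that the complete intersections in sight are nonempty, unbounded, and of the expected dimension $d-n$, so that all the dimension counts above are valid.
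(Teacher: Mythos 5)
Your second paragraph is a genuinely nice and essentially correct ingredient: covering $\R^d\setminus X=\bigcup_i(\R^d\setminus H_i)$ by the open convex domains of linearity, noting that the nerve has dimension at most $n-1$, and applying the nerve lemma plus Alexander duality does give $\tilde H_i(X\cup\{pt\})=0$ for $i\leq d-n-1$. This is a different (and in some ways cleaner) route to the \emph{homology} statement than anything in the paper. But it only yields connectivity after you have supplied $\pi_1(X\cup\{pt\})=0$, and that is exactly where the proposal has a genuine gap. Everything you say about $\pi_1(X)$ and $\pi_i(X)$ rests on the claim that each pair $\left(Y\cap\overline{C_k},\,Y\cap\partial\overline{C_k}\right)$ is $(d-n)$-connected. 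What the inductive hypothesis (the one-point-compactification half) can plausibly give you is connectivity of the \emph{quotient} $\left(Y\cap\overline{C_k}\right)/\left(Y\cap\partial\overline{C_k}\right)$; passing from connectivity of the quotient to connectivity of the pair requires relative Hurewicz or Blakers--Massey, which in turn needs simple connectivity of the subspace $Y\cap\partial\overline{C_k}$ --- precisely the kind of statement you are trying to prove. So the argument as written is circular at the homotopy level, and moreover, as you yourself flag, the inductive hypothesis does not literally apply to $Y\cap C_k$ because $C_k$ is an open polyhedron, not $\R^d$, and when $\overline{C_k}$ is unbounded the quotient is not the one-point compactification of $Y\cap C_k$.

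Your proposed remedy --- ``strengthen the induction to allow intersections inside an arbitrary generic convex polytope with a prescribed union of facets collapsed'' --- is exactly the paper's \Cref{lem:main}, the statement that $(X\cap P, X\cap\partial P)$ is $(d-n-1)$-connected for a generic polyhedron $P$. That relative statement is the technical heart of the whole proof, and the paper establishes it by stratified Morse theory on $X\cap P$ using the product-of-distances function $x\mapsto\prod_{H\in\cH}d(x,H)$, analyzing the normal and tangential Morse data at each critical point and invoking homotopy excision. Your proposal correctly identifies that this is ``the hard part'' but supplies no argument for it, so the reduction is incomplete: you have reduced the theorem to an unproved lemma that is at least as hard as the theorem itself. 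A smaller but real issue is the van Kampen step for $\pi_1(X\cup\{pt\})$: coning off a disconnected link at infinity can create free fundamental group, so you need either connectivity of the link at infinity or an extra appeal to $H_1(X\cup\{pt\})=0$ (a free group with trivial abelianization is trivial) to close that loop.
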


Our proof is based on an argument from an unpublished note by Adiprasito \cite{adiprasito2020note}. For the reader's convenience, we provide complete details. 

\section{Proof outline}

\subsection{Induction base}

We first prove a special case which will serve as the base of an inductive argument.

\begin{prop}\label{prop:base}
Let $X$ be a polyhedral hypersurface in $\R^d$ ($d\geq 2$). Then 
\begin{enumerate}
    \item the one-point compactification of $X$ is $(d-2)$-connected.
    \item the union of compact faces os $X$ is $(d-2)$-connected.
\end{enumerate}

\end{prop}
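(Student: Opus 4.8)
The plan is as follows. Write $f(x)=\max_i\big(\langle x,b_i\rangle+a_i\big)$ and let $\mathcal C=\{C_1,\dots,C_r\}$ be the induced polyhedral subdivision of $\R^d$ into its maximal cells $C_i=\{x:\langle x,b_i\rangle+a_i\ge\langle x,b_j\rangle+a_j\text{ for all }j\}$, so that $X$ is exactly the union of the faces of $\mathcal C$ of dimension $\le d-1$ and $\R^d\setminus X=\bigsqcup_i\operatorname{int}C_i$ is a disjoint union of nonempty open convex (hence contractible) sets. I will work inside $S^d=(\R^d)^+$. Assuming $X\ne\emptyset$, $X$ is unbounded (an unbounded maximal cell has unbounded boundary), so the one-point compactification $X^+$ is the closure of $X$ in $S^d$; moreover $X^+$ is a \emph{finite} polyhedral complex (a neighbourhood of $\infty$ is the cone over the compact recession complex), and $S^d\setminus X^+=\bigsqcup_i\operatorname{int}C_i$.

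\textbf{Part (1).} The idea is to read off the homology of $X^+$ by Alexander duality in $S^d$:
\[
\tilde H_k(X^+;\Z)\ \cong\ \tilde H^{\,d-k-1}(S^d\setminus X^+;\Z)\ =\ \tilde H^{\,d-k-1}\!\Big(\textstyle\bigsqcup_i\operatorname{int}C_i\Big),
\]
whose right-hand side vanishes as soon as $d-k-1\ge1$, i.e.\ for all $k\le d-2$; taking $k=0$ and using $d\ge2$ also gives that $X^+$ is connected, which already settles $d=2$. For $d\ge3$ I would upgrade this through Hurewicz, for which it remains to see $\pi_1(X^+)=0$. To that end, radial projection from an interior point of $C_i$ extends to a characteristic map $D^d\to C_i^+$ (the closure of $C_i$ in $S^d$) sending $\partial D^d$ into $X^+$ and collapsing the recession directions of $C_i$ to $\infty$ (genericity, which excludes slab-like cells, ensures $X^+$ and the $C_i^+$ are CW complexes with the evident cells). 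Hence $S^d$ acquires a finite CW structure whose cells of dimension $<d$ are the faces of $X^+$ and whose $d$-cells are the $\operatorname{int}C_i$; since attaching cells of dimension $d\ge3$ leaves $\pi_1$ unchanged, $\pi_1(X^+)\cong\pi_1(S^d)=0$, and therefore $\pi_k(X^+)=0$ for all $k\le d-2$.

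\textbf{Part (2).} Let $Y$ be the union of the bounded faces of $X$ and let $B$ be the bounded subcomplex of $\mathcal C$, i.e.\ the union of \emph{all} bounded cells (including bounded $d$-cells). Since $Y$ is precisely the $(d-1)$-skeleton of $B$, and $B$ is recovered from $Y$ by attaching the bounded $d$-cells along embedded $(d-1)$-spheres, the inclusion $Y\hookrightarrow B$ induces isomorphisms on $\pi_k$ for $k\le d-2$; thus it suffices to prove $B$ is $(d-2)$-connected, and I would in fact aim to show $B$ is collapsible. This is the crux of the whole statement. Unlike for $X^+$, the complement $S^d\setminus B$ is not a disjoint union of convex sets, so Alexander duality on $B$ is circular; instead one must exploit the combinatorics: $\mathcal C$ is dual to the regular polytopal subdivision of $\operatorname{conv}\{b_1,\dots,b_r\}$ determined by the heights $a_i$, and $B$ corresponds to its interior subcomplex. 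My plan is to put a discrete Morse function on $B$ by sweeping with a generic linear functional (on which $f$ is affine on every face of $\mathcal C$), so that passing each vertex glues on the cone over its descending link; the hard part --- the place where Adiprasito's idea must really be used --- is to prove, via the convexity of the cells, that each such descending link is contractible (or at least $(d-3)$-connected), so that the connectivity of $B$ in degrees below $d-1$ never drops. Granting this, the $(d-1)$-equivalence $Y\hookrightarrow B$ completes Part (2).
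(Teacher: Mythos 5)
Part (1) is correct, though more roundabout than necessary: your own observation that $S^d$ is obtained from $X^+$ by attaching the $d$-cells $\operatorname{int}C_i$ already makes the pair $(S^d,X^+)$ $(d-1)$-connected, hence $\pi_k(X^+)\cong\pi_k(S^d)=0$ for all $k\le d-2$; the Alexander duality and Hurewicz steps can be discarded. This one-line cell-attachment argument is exactly what the paper does for Part (1).

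Part (2), however, has a genuine gap, and it sits exactly where you flag it. Your reduction is sound: $B$ (the union of all bounded cells of the subdivision) is obtained from $Y$ by attaching $d$-cells, so it suffices to show $B$ is $(d-2)$-connected, and you aim for contractibility. But you then only \emph{propose} a strategy --- a discrete Morse function coming from a generic linear sweep --- whose essential claim, that every descending link in $B$ is contractible (or sufficiently connected), you explicitly defer with ``Granting this.'' That claim is not a formality: for a general polyhedral complex descending links of a generic linear functional need not be connected at all (already for a subdivided circle the top vertex has a two-point descending link), so one must genuinely use the structure of the bounded complex of a convex subdivision, and no such argument is given. The paper closes precisely this gap by an elementary geometric construction instead of combinatorics: enclose $B$ in a large ball centered at a point of $B$, and for each unbounded face $F$ of the subdivision deformation retract $F\cap(\text{ball})$ onto the union of its compact faces by radial projection from a point of $F$ placed very far from the bounding sphere (the ``glass filled with water'' picture from Hatcher's proof of Proposition 0.16), iterating over the unbounded faces. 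This exhibits $B$ as a deformation retract of $\R^d$, hence contractible, after which your $(d-1)$-equivalence $Y\hookrightarrow B$ finishes the proof. As written, your argument for Part (2) is incomplete; either supply the descending-link lemma or replace the sweep by such a direct retraction.
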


\begin{proof}
     As CW-complexes, $\R^d\cup\{pt\}$ is obtained from $X\cup \{pt\}$ by attaching $d$-cells. Since $\R^d\cup\{pt\}\cong S^d$ is $(d-1)$-connected, $X\cup \{pt\}$ is $(d-2)$-connected. This proves the first statement. 
     
     Let $\overline{X}$ be the polyhedral subdivision of $\R^d$ induced by $X$. Let $\overline{X}^\text{cpt}$ be the union of all compact faces of $\overline{X}$ and $X^\text{cpt}$ be the union of all compact faces of $X$. We claim that $\R^d$ deformation retracts onto $\overline{X}^\text{cpt}$. First, pick any point $x\in \overline{X}^\text{cpt}$ and let $B$ be a sufficiently large open ball centered at $x$ that contains $\overline{X}^\text{cpt}$. Since $\R^d$ deformation retracts onto $B$, it remains to show that $B$ deformation retracts onto $\overline{X}^\text{cpt}$. This can be done using an argument in \cite{MR1867354} (Proof of Proposition 0.16. See also \Cref{fig:projection}). For each unbounded face $F$ of $\overline{X}$, $F\cap B$ is a polyhedron truncated by a large sphere. It can be thought of as a glass filled with water, the sphere being the surface of the water. Pick a point $O$ that is in $F$ and sufficiently far from the sphere. Then the radial projection from $O$ deformation retracts the water onto the glass. We repeat this process for $F'\cap B$ for all the unbounded lower dimensional faces $F'$ of $F$, until all the unbounded faces of $F$ are contracted. The overall effect is that we get a deformation retraction of $F\cap B$ on to the compact faces of $F$. Apply the above construction to $F\cap B$ for all unbounded faces $F$. This deformation retracts $B$ onto $\overline{X}^\text{cpt}$. Hence, $\R^d$ deformation retracts onto $\overline{X}^\text{cpt}$, so $\overline{X}^\text{cpt}$ is contractible. Since $\overline{X}^\text{cpt}$ is obtained from $X^\text{cpt}$ by attaching $d$-cells, $X^\text{cpt}$ is $(d-2)$-connected.
\end{proof}

\begin{figure}
    \centering
    \includegraphics[width=2in]{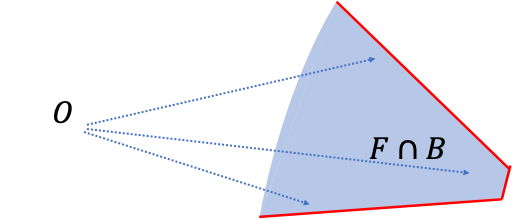}
    \caption{Radial projection from a distant point $O$.}
    \label{fig:projection}
\end{figure}

\subsection{Inductive steps}

We will prove \Cref{thm:main} and the following lemma in conjunction by inducting on both $d$ and $n$.

\begin{lemma}\label{lem:main}
    Let $X$ be a complete intersection of $n$ generic polyhedral hypersurfaces in $\R^d$ where $d\geq 2, d>n$. Let $P$ be a full-dimensional polyhedron defined by generic supporting hyperplanes. Then the pair $(X\cap P, X\cap \partial P)$ is $(d-n-1)$-connected.
\end{lemma}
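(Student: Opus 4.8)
The plan is to prove \Cref{thm:main} and \Cref{lem:main} together, by induction on $n$, with a secondary induction on $d$ inside the step for each fixed $n$. The base case $n=1$ is \Cref{prop:base} (for the one-point compactification), together with the observation used in its proof: $\R^d$ is obtained from a polyhedral hypersurface $X$ by attaching cells of dimension $d\ge 2$, so also $\pi_i(X)\cong\pi_i(\R^d)=0$ for $i\le d-2$ and $X$ is connected. Throughout, write $X=H_1\cap\cdots\cap H_n$ with the $H_i$ generic, and $Y=H_1\cap\cdots\cap H_{n-1}$, so $X=Y\cap H_n$.

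\emph{From the Lemma at level $n-1$ to the first half of the Theorem at level $n$.} Let $U_1,U_2,\dots$ be the open chambers of $\R^d\setminus H_n$. Each closure $\overline{U_k}$ is a full-dimensional polyhedron with generic supporting hyperplanes, $\partial\overline{U_k}\subseteq H_n$, and $Y=X\cup\bigcup_k(Y\cap\overline{U_k})$, where $(Y\cap\overline{U_k})\cap X=Y\cap\partial\overline{U_k}\subseteq X$ and $(Y\cap\overline{U_j})\cap(Y\cap\overline{U_k})$ has lower dimension for $j\ne k$. Applying \Cref{lem:main} at level $n-1$ to $Y$ with $P=\overline{U_k}$ shows each pair $(Y\cap\overline{U_k},\,Y\cap\partial\overline{U_k})$ is $(d-n)$-connected, hence is homotopy equivalent, rel $Y\cap\partial\overline{U_k}$, to a relative CW-complex with cells of dimension $\ge d-n+1$. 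Thus $X\hookrightarrow Y$ is a $(d-n)$-connected map; since $Y$ is $(d-n)$-connected by \Cref{thm:main} at level $n-1$, we get $\pi_i(X)=0$ for $i\le d-n-1$, which is the first assertion of \Cref{thm:main}.

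\emph{The Lemma at level $n$.} For fixed $n$ I would prove \Cref{lem:main} by induction on the number $m$ of facets of $P$, the case $m=0$ being exactly the statement just proved. For the step, let $P'=P\cap\{\ell\le 0\}$ with $H:=\{\ell=0\}\cong\R^{d-1}$ a new generic hyperplane, $H^{\pm}:=\{\pm\ell\ge 0\}$, and put $A=X\cap P$, $C=X\cap\partial P$, so that $X\cap P'=A\cap H^-$ and $X\cap\partial P'=(C\cap H^-)\cup(A\cap H)=:D^-$; define $D^+$ symmetrically. Cutting $A$ along its slice $A\cap H$ gives a natural identification
\[
  A\big/\bigl(C\cup(A\cap H)\bigr)\;\simeq\;\bigl(A\cap H^-\big/D^-\bigr)\ \vee\ \bigl(A\cap H^+\big/D^+\bigr),
\]
so $H_i(A\cap H^-,D^-)$ is a direct summand of $H_i\bigl(A,\,C\cup(A\cap H)\bigr)$. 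Now the long exact sequence of the triple $\bigl(A,\,C\cup(A\cap H),\,C\bigr)$, the inductive hypothesis on $m$ (giving $H_i(A,C)=0$ for $i\le d-n-1$), excision (identifying $H_i(C\cup(A\cap H),C)$ with $H_i(A\cap H,\,C\cap H)$), and \Cref{lem:main} in dimension $d-1$ applied to $X\cap H$ and $P\cap H$ inside $H$ (giving $H_i(A\cap H,\,C\cap H)=0$ for $i\le d-n-2$; this input is vacuous when $d=n+1$, which is why that case needs no further hypothesis) together force $H_i\bigl(A,\,C\cup(A\cap H)\bigr)=0$ for $i\le d-n-1$, hence $H_i(A\cap H^-,D^-)=0$ for $i\le d-n-1$. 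One then upgrades this to $(d-n-1)$-connectedness of the pair $(A\cap H^-,D^-)$ by checking $\pi_0$ and $\pi_1$ directly with van Kampen, the higher groups following from the relative Hurewicz theorem — or, more uniformly, by replacing the homology computation with an appeal to homotopy excision for the triad $(A;A\cap H^-,A\cap H^+)$ and its boundary counterpart $(C;C\cap H^-,C\cap H^+)$.

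\emph{The one-point compactification.} Finally, for the second assertion of \Cref{thm:main}, choose a very large simplex $Q$ with generic facet hyperplanes, containing the bounded part of $X$. A radial-projection argument as in the proof of \Cref{prop:base} shows that the closed neighborhood of infinity $(X\setminus\operatorname{int}Q)\cup\{\infty\}$ is a cone on the link of $X$ at infinity, hence contractible; collapsing this contractible subcomplex of $X\cup\{\infty\}$ gives $X\cup\{\infty\}\simeq (X\cap Q)\big/(X\cap\partial Q)$. Since $X$ is unbounded (its recession complex has dimension $d-n\ge 1$), $X\cap\partial Q\ne\emptyset$, and by \Cref{lem:main} at level $n$ applied to $X$ and $Q$ the pair $(X\cap Q,\,X\cap\partial Q)$ is $(d-n-1)$-connected; since the quotient of a $k$-connected pair by a nonempty subcomplex is $k$-connected, $X\cup\{\infty\}$ is $(d-n-1)$-connected. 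The step I expect to be the main obstacle is the hyperplane-cutting step inside the proof of the Lemma: making the cutting of $A$ along $A\cap H$ rigorous — verifying the cofibration and excisiveness hypotheses for these polyhedral pairs, and carrying the $\pi_0$- and $\pi_1$-information through the low ranges $d-n\in\{1,2\}$ — rather than the comparatively formal chamber and compactification arguments.
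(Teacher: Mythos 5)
Your route to \Cref{lem:main} is genuinely different from the paper's. The paper sweeps $P$ from its boundary inward using stratified Morse theory for $f(x)=\prod_{H\in\mathcal{H}}d(x,H)$: at each critical point the gluing data is (a product of $(\sigma,\partial\sigma)$ with) a cone pair $(CA,A)$, whose $(d-n-1)$-connectedness follows \emph{directly in homotopy} from the absolute statement $P^n_{d-1}$ via the long exact sequence of the pair, and homotopy excision plus the triple sequence then globalizes. You instead induct on the number of facets of $P$, cutting along one hyperplane at a time. Your chamber argument for deducing \Cref{thm:main} and the homological bookkeeping of the cutting step (the wedge decomposition, the triple $(A,\,C\cup(A\cap H),\,C)$, excision down to $(A\cap H, C\cap H)$, and the Lemma in dimension $d-1$ applied to $X\cap H$) are sound, modulo starting the facet induction at $m=1$ rather than at the degenerate pair $(X,\emptyset)$; the double induction this sets up is not circular.

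The genuine gap is the upgrade from $H_i(X\cap P',\,X\cap\partial P')=0$ to vanishing of the relative homotopy groups, which you flag as the main obstacle but do not resolve. Relative Hurewicz in the direction you need requires $\pi_1(X\cap\partial P')$ to act trivially on the relative homotopy groups; without that, the theorem only identifies $H_k$ with the coinvariants of $\pi_k$ under the $\pi_1$-action, and coinvariants can vanish while the group does not (e.g.\ $\Z$ with a generator acting by multiplication by $2$). The subspace $X\cap\partial P'$ is in general far from simply connected --- already for $n=1$, $d=4$ it is a $2$-dimensional complex in $\partial P'\cong S^3$, typically with free fundamental group --- and this matters exactly in the range $d-n-1\geq 2$ where Hurewicz must do the work. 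The alternative you gesture at, homotopy excision for the triad $(A;A\cap H^-,A\cap H^+)$, compares $\pi_i(A\cap H^-,A\cap H)$ with $\pi_i(A,A\cap H^+)$, neither of which is the pair $(A\cap H^-,D^-)$ you need; the relevant triad is $A=(A\cap H^-)\cup\bigl(C\cup(A\cap H^+)\bigr)$ with intersection $D^-$, and the connectivity hypotheses Blakers--Massey demands of it are partly the conclusion you are trying to prove. Likewise, van Kampen computes absolute $\pi_1$, not the relative $\pi_1$ of a pair. This is precisely the difficulty the paper's Morse-theoretic sweep is built to avoid: its local models are cones, for which the homotopy statement is immediate from the absolute connectivity one dimension down.
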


To clean up notations, let $P^n_d$ be the statement of \Cref{thm:main} and $C^n_d$ be the statement of \Cref{lem:main} with parameters $d$ and $n$. \Cref{prop:base} says $P^1_d$ is true for all $d\geq 2$. If we can prove
\begin{equation}
    P^n_{d-1}\Rightarrow C^n_d \text{ and } P^{n-1}_d\wedge C^{n-1}_d \Rightarrow P^n_d,
\end{equation} 
then we are done, since with the above implications, 
\begin{equation}
    P^{n-1}_d\wedge P^{n-1}_{d-1} \Rightarrow P^n_d,
\end{equation}
which eventually reduces the question to $P^1_d$.

\begin{proof}[Proof of $ P^n_{d-1}\Rightarrow C^n_d$] Take $X$ and $P$ as in the hypothesis of $C^n_d$. Let $\cH$ be the minimal set of supporting hyperplanes of $P$. Consider the function
\begin{equation}
    f: P \mapsto \R, x\mapsto \prod_{H\in \cH} d(x,H)
\end{equation}
where $d(x,H)$ is the distance from $x$ to $H$.
The genericity condition for $X$ and $P$ guarantees that $X$ is a Whitney stratified space and that $f$ is a Morse function on $X$. Let $x_1,...,x_r$ be the finitely many critical points of $f_X$ and $t_1,...,t_r$ be the corresponding distinct critical values. Set
\begin{equation}
    X_{I} = f^{-1}(I)\cap X
\end{equation}
for $I$ an interval or a single point in $\R_{\geq 0}$. We keep track of the topology change of $X_{[0,t]}$ as $t$ crosses the critical values. There are two cases (see \Cref{fig:morse}).
    \begin{itemize}
        \item \textbf{Case 1}: $x_k$ is a vertex of $X$. Let $T_{x_k}X$ be the tangent fan of $X$ at $x_k$ and $H$ be the tangent hyperplane $T_{x_k}f^{-1}(t_k)$. Translate $H$ slightly towards $T_{x_k}f^{-1}([0,t_k))$ and call the new hyperplane $H'$. Let $A$ be the intersection of $H'$ with $T_{x_k}X$. Note that $A$ is an intersection of $n$ generic polyhedral hypersurfaces in $\R^{d-1}$.
        
        Let $\epsilon>0$ be sufficiently small. By definition, the space $X_{[0,t_k]}$ is obtained from $X_{[0,t_k-\epsilon)}$ by attaching $X_{(t_k-\epsilon,t_k]}$ along $f^{-1}(t_k-\epsilon)\cap X$, which may have (finitely) many connected components. Suppose $X_{(t_k-\epsilon,t_k]}$ and $f^{-1}(t_k-\epsilon)\cap X$ decompose into disjoint unions of their connected components as follows
        \begin{equation}
            X_{[t_k-\epsilon,t_k]} = \bigsqcup_i^\ell Y_i,\quad f^{-1}(t_k-\epsilon)\cap X = \bigsqcup_i^\ell Z_i,
        \end{equation}
        such that $Y_i$ is glued along $Z_i$ for each $i$. Suppose $x_k$ is in $Y_1$. Then $(Y_1,Z_1)$ is homotopy equivalent to $(CA,A)$, and $(Y_i,Z_i)$ is homotopy equivalent to $(\R^{d-n-1}\times [0,\epsilon],\R^{d-n-1})$. By the induction hypothesis, $A$ is $(d-n-2)$-connected. By the homotopy long exact sequence for the pair $(CA,A)$, $(CA,A)$ is $(d-n-1)$-connected. Hence, the gluing data $(X_{[t_k-\epsilon,t_k]},f^{-1}(t_k-\epsilon))$ is $(d-n-1)$-connected. By definition, $(X_{[0,t_k-\epsilon]},f^{-1}(t_k-\epsilon))$ is path-connected. By homotopy excision (\cite{MR1867354}, Theorem 4.23), $(X_{[0,t_k]},X_{[0,t_k-\epsilon]})$ is $(d-n-1)$-connected. 
        
        \item \textbf{Case 2}: $x_k$ is in the relative interior of a $k$-dimensional face $\sigma$ of $X$. In this case, the gluing data splits into normal Morse data and tangential Morse data. Similar to Case 1, the gluing data may have more than one connected components, and the only component that contributes to the topology change is the one where $x_k$ is. The treatment for multiple components is exactly the same as in the precious case. Therefore, it doesn't hurt to assume that the gluing data has only one connected component.
        
        The tangential Morse data is $(\sigma, \partial \sigma)$. Let $N$ be the orthogonal complement of $T_{x_k}\sigma$. The normal Morse data is 
         \begin{equation}
        (N\cap T_{x_i}f^{-1}([0,t_i])\cap T_{x_i}X, N\cap H'\cap T_{x_i}f^{-1}([0,t_i])\cap T_{x_i}X)
    \end{equation}
    where $H'$ is as in the previous case. Note that $N\cap T_{x_i}f^{-1}([0,t_i])\cap T_{x_i}X$ is the cone over $N\cap H'\cap T_{x_i}f^{-1}([0,t_i])\cap T_{x_i}X$. The latter is a complete intersection of $n$ generic polyhedral hypersurfaces in $\R^{d-k-1}$. By the induction hypothesis and the homotopy long exact sequence for a pair, the normal Morse data is $(d-k-n-1)$-connected. Since the tangential Morse data $(\sigma, \partial \sigma)$ is $(k-1)$-connected, the total gluing data
    \begin{equation}
        (\sigma,\partial \sigma )\times  (N\cap T_{x_i}f^{-1}([0,t_i])\cap T_{x_i}X, N\cap H'\cap T_{x_i}f^{-1}([0,t_i])\cap T_{x_i}X)
    \end{equation}
    is $(d-n-1)$-connected. Now repeat the argument in the previous case, we conclude that $(X_{[0,t_k]},X_{[0,t_k-\epsilon]})$ is $(d-n-1)$-connected.  
    
        \end{itemize}

        Observe that $X_{[0,t_{k+1}-\epsilon]}$ deformation retracts onto $X_{[0,t_k]}$, so to sum up, we have,
        \begin{equation}
            (X_{[0,t_{k+1}-\epsilon]},X_{[0,t_k-\epsilon]}) \text{ is $(d-n-1)$-connected}
        \end{equation}
        and
        \begin{equation}
            (X_{[0,t_1-\epsilon]},X_0) \text{ is $(d-n-1)$-connected}
        \end{equation}

        By applying homotopy long exact sequence to the triple $(X_{[0,t_{k+1}-\epsilon]},X_{[0,t_k-\epsilon]},X_0)$, the above two connectedness properties imply that $(X_{[0,t_{k+1}-\epsilon]},X_0)$ is $(d-n-1)$-connected. In other words, $(X_{[0,t)}, X_0)$ is $(d-n-1)$-connected for all $t>0$, which completes the proof.
\end{proof}
\begin{figure}[H]
    \centering
    \includegraphics[width=3.5in]{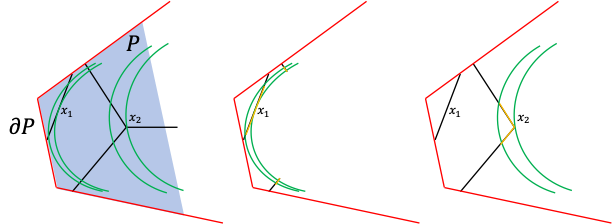}
    \caption{Illustration of the Morse-theoretical argument. $x_1$ is a critical point in the interior of a 1-cell. $x_2$ is a vertex critical point. The orange parts are the gluing data when the level set crosses each critical point.}
    \label{fig:morse}
\end{figure}

\begin{proof}[Proof of $P^{n-1}_d\wedge C^{n-1}_d \Rightarrow P^n_d$] Let $X$ be a complete intersection of $n-1$ generic polyhedral hypersurfaces and $X_n$ be another generic hypersurface. Let $X'=X\cap X_n$. Pick any connected component of $\R^d\backslash X_n$ and take its closure, which is a polyhedron $P$ with generic supporting hyperplanes. Applying \Cref{lem:main} to $X$ and $P$, we know that $X$ is obtained from $X\cap \partial P$ be gluing cells of dimension no less than $d-n+1$. Therefore, $X$ is obtained from $X'$ by gluing cells of dimension no less than $d-n+1$. Since by the induction hypothesis $X$ is $(d-n)$-connected, $X'$ is $(d-n-1)$-connected.
\end{proof}

\begin{rmk}
    The proof of the inductive steps is only given for $X$. The proof for the corresponding statement for the one-point compactification of $X$ only differs at the final stage of the Morse-theoretical argument in \Cref{lem:main}. Without the one-point compactification, the topology of $X_{[0,t)}$ no longer changes when $t$ exceeds the largest critical value, so $X_{[0,t)}$ is homotopy equivalent to $X\cap P$ for any $t>t_r$. With the one-point compactification, $X\cup \{pt\}$ is obtained from $X_{[0,t)}\cup \{pt\}$ by attaching more cells of dimension $d-n+1$, respectively, along their boundaries $X_t$. Therefore, \Cref{lem:main} also holds for $X\cup \{pt\}$.
\end{rmk}

\printbibliography

\end{document}